\numberwithin{equation}{section}
\numberwithin{figure}{section}
\theoremstyle{plain}
\newtheorem{thm}[subsection]{Theorem}
\newtheorem{lemma}[subsection]{Lemma}
\newtheorem{prop}[subsection]{Proposition}
\theoremstyle{definition}
\newtheorem{rmk}[subsection]{Remark}
\newtheorem{defn}[subsection]{Definition}
\let\l\lambda
\let\r\rho
\let\s\sigma
\let\th\theta
\def\scr{\mathscr}
\def\cal{\mathcal}
\let\G\Gamma
\newcommand{\ra}{\longrightarrow}
\newcommand{\ro}{{\mbox{ or }}}
\newcommand{\im}{{\rm im}\:}
\newcommand{\GG}{{\mathbb G}}
\newcommand{\F}{{\mathbb F}}
\newcommand{\Z}{{\mathbb Z}}
\newcommand{\C}{{\mathbb C}}
\newcommand{\Q}{{\mathbb Q}}
\newcommand{\R}{{\mathbb R}}
\newcommand{\cA}{\mathcal{A}}
\newcommand{\cF}{\mathcal{F}}
\newcommand{\cE}{\mathcal{E}}
\newcommand{\cO}{\mathcal{O}}
\newcommand{\geom}{\mathrm{geom}}
\newcommand{\End}{{\mathrm{End}}}
\newcommand{\cri}{{\mathrm{cris}}}
\newcommand{\Aut}{{\mathrm{Aut}}}
\newcommand{\et}{{\mathrm{et}}}
\begin{document}

\title{$l$-adic monodromy and Shimura curves in positive characteristics}

\author{Jie Xia}

\address{Mathematics Department, Columbia University in the city of New York}

\email{xiajie@math.columbia.edu}

\maketitle
\begin{abstract}
In this paper, we seek an appropriate definition for a Shimura curve of Hodge type in positive characteristics, i.e. a characterization, in terms of geometry mod $p$, of curves in positive characteristics which are reduction of Shimura curve over $\C$. Specifically, we study the liftablity of a curve in moduli space $\cal{A}_{4,1,k}$ of principally polarized abelian varieties over $k, \text{char } k=p$. We show that some conditions on the $l$-adic monodromy over such a curve imply that this curve can be lifted to a Shimura curve. 
\end{abstract}
\section{Introduction}

This paper is a sequel to our previous paper \cite{Xia4} and \cite{Xia3}. All the three paper aim at answering the following question: 
\[\textit{What is an appropriate definition of Shimura curves in positive characteristics ?}
\]
The answer is only known for Shimura varieties of PEL type which admit a natural moduli interpretation. In this paper, we consider Shimura curves of Hodge type and give an answer in the generic ordinary
case, in terms of $l$-adic monodromy.

In \cite{Xia4}, we start with a proper family of abelian varieties in characteristic $p$ and prove if this family admits some
special crystalline cycles, then it is a reduction of a Shimura curve
of Mumford type. In this paper we present a similar result. We find the conditions on the $l$-adic monodromy associated to the family, which imply the family is a reduction of a Mumford type family of abelian fourfolds. 

%In \cite{Noot}, Noot studies the $l$-adic representation associated to an abelian fourfold of Mumford type over a number field and concludes that such abelian fourfolds admit potentially good reduction at every prime.

Let $\pi:X\ra C$ be a family of principally polarized abelian fourfolds over a proper smooth
curve $C$ defined over a finite field ${\F}_{q}$ with $q=p^{f}$,
$p>3$. 
Let $\cE_{l}$ be the lisse \'etale $l$-adic sheaf $R^{1}\pi_{*}(\Q_{l})$
over $C$. Choose a geometric point $\bar{\xi}$ and a closed point $c$ in $C$ , and then $\cE_{l}$
induces a monodromy: 
\[
\r:\pi_{1}(C,\bar{\xi})\ra\Aut(\cE_{l,c})\cong GL(8,\Q_{l}).
\]
 Let $G_{l}$ be the Zariski closure of $\r(\pi_{1}(C,\bar{\xi}))$
in $GL(8,\Q_{l})$ and $G_{l}^{\geom}$ be that of $\r(\pi_{1}^{\geom}(C,\bar{\xi}))$.
%in $GL(8,\Q_{l})$.
\[
1\ra\pi_{1}^{\geom}(C,\bar{\xi})\ra \pi_{1}(C,\bar{\xi})\ra\mbox{Gal}(\bar{\F}_q/\F_{q})\ra1.
\]Then $G_{l}^{\geom}$ is a normal subgroup of $G_{l}$.

To every closed point $c\in C$, it associates a unique (up to conjugation) Frobenius element $F_{c}$ in $\pi_{1}(C,\bar{\xi})$. Its image in
$\mbox{Gal}(\bar{\F}_q/\F_{q})$ is the integer $\deg(\kappa(c):\F_{q})$.

For notational simplicity, let us define the following representation
\[
\r_{0}:SL(2,\Q_{l})^{\times3}\ra \Aut(\cE_{l,c})
\]
 as the tensor product of three copies of the standard representation of $SL(2,\Q_{l})$. %Recall that $\pi^{\geom}_1(C, \bar{c})$ is a normal subgroup of $\pi_1(C, \bar{c})$. 

Fix an embedding $\Q_{l}\ra\C$ once for all. Our main theorem is
as follows. \begin{thm}\label{main theorem} If there exists a
closed point $c\in C$ such that 
\begin{enumerate}
\item $G_{l}^{\geom,o}\otimes_{\Q_{l}}\C\cong\im\r_{0}\otimes\C$, 
\item $X_{c}$ is an ordinary abelian variety, 
\item In $G_{l}$, $\r(F_{c})$ generates a maximal torus which is unramified
over $\Q_{p}$. 
\end{enumerate}
Then $X\ra C$ is a weak Mumford curve. 

If further assume the Higgs
field of $X\ra C$ is maximal, then there exists a family of polarized abelian fourfolds $Y\ra C'$ such that 

\begin{enumerate}
\item $C'\ra C$ is a finite \'etale covering, 
\item $Y\ra X'$ is an isogeny between $Y$ and the pullback
family of $X$ over $C'$,
\item $Y\ra C'$ is a good reduction of a Mumford curve. 
\end{enumerate}
\end{thm}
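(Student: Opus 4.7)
The plan is to reduce to the setup of \cite{Xia4}, where an analogous result is proved under crystalline cycle hypotheses; the crux is thus to translate the $l$-adic monodromy and Frobenius assumptions at the ordinary point $c$ into the crystalline data that \cite{Xia4} requires.

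First I would analyze $\r(F_c)$ using conditions (1) and (3). By (1), the connected component $G_l^{\geom,o}$ is the image of $\r_0\colon SL(2)^3\to GL(8)$; a maximal torus of $\im\r_0$ is therefore a rank-three torus whose characters split the standard eight-dimensional representation as a tensor product of three two-dimensional pieces. By (3), $\r(F_c)$ generates a maximal torus $T$ of $G_l$ which is unramified over $\Q_p$. After choosing a compatible maximal torus in $G_l^{\geom,o}$, one factors the eight Frobenius eigenvalues as $\a_i\b_j\g_k$ with $i,j,k\in\{1,2\}$, with $p$-adic valuations compatible with the ordinariness condition (2).

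Next I would use the ordinariness of $X_c$ to pass from the $l$-adic to the crystalline side. For an ordinary abelian variety over a finite field, the characteristic polynomial of crystalline Frobenius on $H^1_{\cri}(X_c/W)$ agrees with that of the $l$-adic Frobenius on $H^1_{\et}(X_{c,\bar{\F}_q},\Q_l)$, and the unit-root decomposition matches the two-step Hodge filtration. The product structure on the eigenvalues together with the unramifiedness of $T$ promote the tensor decomposition to a decomposition of the Dieudonn\'e module after base change to a suitable unramified extension of $\Q_p$, giving idempotents (hence the crystalline Tate classes in the sense of \cite{Xia4}) that realize the Mumford-type tensor structure on the fiber at $c$. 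With these special crystalline cycles in hand, the main theorem of \cite{Xia4} applied to $\pi\colon X\to C$ yields that $X\to C$ is a weak Mumford curve. Under the additional maximal Higgs field hypothesis, I would appeal to the refinement of \cite{Xia4} producing a finite \'etale cover $C'\to C$ together with an isogeny $Y\to X'$ such that $Y\to C'$ is a good reduction of a Mumford curve; the maximality of the Higgs field is what rules out degeneration of the formal lift obtained at the ordinary point and ensures algebraization.

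The main difficulty is the second step: going from an $l$-adic tensor decomposition of the Galois representation to a genuine tensor decomposition of the crystalline cohomology. A single Frobenius eigenvalue factorization is only a trace-level statement; constructing the required idempotents requires using the unramifiedness of $T$ to realize the two-dimensional ``factors'' as subspaces defined over an unramified extension of $\Q_p$, and then descending these through the comparison between the \'etale and crystalline structures on the connected and \'etale parts of the $p$-divisible group of $X_c$. Carrying this bridge out rigorously, and verifying that the resulting decomposition propagates along the curve compatibly with the $F$-isocrystal of $R^1\pi_*\cO_{X/W}$, is where the essential technical work lies.
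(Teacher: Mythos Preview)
Your outline reduces to \cite{Xia4} as the paper does, but the bridge you propose---construct a tensor decomposition of the Dieudonn\'e module at the single fiber $X_c$ and then propagate it along $C$---is not the paper's argument, and the propagation step is a genuine gap. A splitting of $\cE_{p,c}$ at one closed point carries no mechanism for extending to a horizontal splitting of the global $F$-isocrystal; the Gauss--Manin connection has no reason to preserve it. You flag this yourself as ``where the essential technical work lies,'' but offer no tool to carry it out.

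The paper avoids this difficulty by never working locally in the first place. Condition~(1) alone, with no reference to $c$, computes the \emph{global} $l$-adic invariants by pure $SL(2,\C)^{\times3}$ representation theory:
\[
\dim H^0_{\et}(C_{\bar\F_q},\wedge^4\cE_l)^{F-q^2}=1,\qquad
\dim H^0_{\et}(C_{\bar\F_q},\scr{E}nd(\wedge^2\cE_l))^{F}=4.
\]
These are then transported to crystalline cohomology by comparing the $l$-adic and crystalline Lefschetz trace formulas (Deligne on one side, Etesse--le~Stum on the other), using purity on both sides to rule out cancellation between numerator and denominator. This gives the same dimensions for $H^0_{\cri}(C/\Z_q,\wedge^4\cE_p)$ and $H^0_{\cri}(C/\Z_q,\scr{E}nd(\wedge^2\cE_p))$ directly, with no local-to-global passage needed.

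Conditions~(2) and~(3) enter only afterwards, and for a narrower purpose than you suggest: not to build the decomposition, but to show that the $4$-dimensional commutative algebra $\End^0(\wedge^2\cE_p)^F$ is split, i.e.\ isomorphic to $\Q_q^{\times4}$ rather than containing a ramified field factor. The ordinariness gives the unit-root splitting $\cE_{p,c}\cong\cal U_c\oplus\cal U_c^\vee$, the maximality of the Frobenius torus forces the eigenvalues $\l_i$ to satisfy no relations beyond $\l_1\l_4=\l_2\l_3$, and the unramifiedness of the $\l_i$ over $\Q_p$ then excludes the ramified cases by a rank count. This split algebra structure is exactly the input to \cite{Xia4}.

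So the missing ingredient in your plan is the Lefschetz trace formula comparison; that is what replaces your unproven propagation step.
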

For the definition of weak Mumford curve, see Section \ref{Mumford curves}.
\begin{rmk} By \ref{Frobeius torus} and Chebotarev density theorem, the Frobenius element over sufficiently many points in $C$ generates the maximal torus in $G_l$. From \cite{Larsen}, we know that the torus generated
by the Frobenius $F_{c}$ is defined over $\Q$. So it makes sense
to require the torus is unramified over $\Q_{p}$, which is equivalent
to say that the eigenvalues of the Frobenius are unramified over $\Q_{p}$.  

We wonder if (3) can be replaced by a weaker condition, especially under the presence of (1) and (2). More are explained in Remark \ref{unramified condition}.
 \end{rmk} 

%The first condition implies that $G^o_l \otimes_{\Q_l} \C \cong \GG_m. SL(2, \C)$. 

\subsection{Structure of the paper}

To prove \ref{main theorem}, we reduce it to the main theorem in \cite{Xia4}. We need
to compute the Frobenius eigenvalues in the $l$-adic setting and the
connection with the crystalline cohomology. %%%%%%%%%%%%%%
%Our proof of \ref{main theorem} depends essentially on \ref{Xia4} and the comparison of Lefschetz trace formulas between $l-$adic and $p-$adic cases. 
%%%%%%%%%%%%%%

In Section \ref{examples}, we show some good reductions of Mumford
curves satisfy the conditions in \ref{main theorem} so that we are not proving a vacuous theorem.

In Section \ref{Frobenius eigenvalues}, we compute the dimensions of the Frobenius eigenspaces in
$H_{\et}^{0}(C_{\bar{\F}_{q}},\wedge^{4}\cE_{l})$ and $H_{\et}^{0}(C_{\bar{\F}_{q}},\scr{E} nd(\wedge^{2}\cE_{l}))$.
\begin{equation}
\begin{aligned}\dim H_{\et}^{0}(C_{\bar{\F}_{q}},\wedge^{4}\cE_{l})^{F-q^{2}}\otimes\Q_{q} & =1\\
\dim H_{\et}^{0}(C_{\bar{\F}_{q}},\scr{E} nd(\wedge^{2}\cE_{l}))^{F}\otimes\Q_{q} & =4.
\end{aligned}
\end{equation}

 In Section \ref{comparison}, we translate the above result to $p$-adic
case via comparing Lefschetz trace formulas. Therefore the Frobenius eigenspaces in crystalline cohomology spaces have the expected dimensions as in the main theorem of \cite{Xia4}. Furthermore, we conclude in Section \ref{compute} that 
\[
\G((C/W(k))_{\cri},\scr{E}nd(\wedge^{2}\cE))^{F}\otimes\Q_{q} \cong \Q_{q}^{\times4}
\]  as algebras. Then \ref{main theorem} boils down
to the main result in \cite{Xia4}.

\textbf{Acknowledgements.} I am deeply indebted to my thesis advisor Aise Johan de Jong, who introduced me to this subject and encouraged me through this project. Without him I would have given up long time ago. I thank Martin Olsson for useful discussion and feedbacks. 
\section{Mumford curves}

\label{Mumford curves} %Introduce the definition of Mumford curves and Mumford curves over k
In \cite[Chapter 4]{Mum}, Mumford defines a family of Shimura curves.
We briefly recall the construction. 

Let $F$ be a cubic
totally real field and $D$ be a quaternion division algebra over
$F$ such that 
\[
D\otimes_{\Q}\R\cong\mathbb{H}\times\mathbb{H}\times M_{2}(\R),{\rm {Cor}_{F/\Q}(D)\cong M_{8}(\Q).}
\]
 Here $\mathbb{H}$ is the quaternion algebra.

Let $G=\{x\in D^{*}|x\bar{x}=1\}$. Then $G$ is a $\Q$-simple algebraic
group and it is the $\Q$-form of the $\R$-algebraic group $SU(2)\times SU(2)\times SL(2,\R)$.
\begin{align*}
h:\mathbb{S}_{m}(\R)\ra & G(\R)\\
e^{i\th}\mapsto & I_{4}\otimes\begin{pmatrix}\cos\th & \sin\th\\
-\sin\th & \cos\th
\end{pmatrix}.
\end{align*}

The pair $(G,h)$ forms a Shimura datum. And it defines Shimura
curves, parameterizing abelian fourfolds. 
We call such curves (with its universal family) \textit{Mumford curves}.

\begin{defn}

\begin{enumerate}
\item A curve in $\cA_{4,1,n}\otimes\C$ is called a \textit{special
Mumford curve} if it is the image of a Mumford curve in $\cA_{4,1,n}\otimes\C$
induced by a universal family.
\item The family $X\ra C$ is a \textit{weak Mumford curve over $k$} if
the image of $C\ra\cA_{4,1,n}$ (induced by the family $X/C$)
is (possible an irreducible component of) a reduction of a special
Mumford curve in $\cA_{4,1,n}\otimes\C$ .
\end{enumerate}
\end{defn} 

\begin{rmk}
The "weakness" of the weak Mumford curve,  comparing to good reductions, reflects in two aspects: firstly, the image of $C\ra \cA_{4,1,n}$ might have singularities. Secondly, the reduction of a special Mumford curve at $k$ might be reducible and that image of $C\ra \cA_{4,1,n}$ is just one of the irreducible components. 
\end{rmk}

\section{Examples}

\label{examples} To indicate that \ref{main theorem} is not a vacuous
result, we show good reductions of a Mumford curve with an ordinary
fiber satisfy the conditions of \ref{main theorem}.

Let $f: A\ra M$ be the universal family over a Mumford curve defined
over $\C$ and $M$ is defined over the reflex field $K$. For every
$p$ over which $K$ splits, $A\ra M$ admits a smooth and generically
ordinary reduction $\pi: X\ra C$ over $p$ (\cite{Zuo2}). By the definition
of Mumford curve or \cite[Proposition 2.4]{Xia2}, the image of 
\[
\r_{\C}:\pi_{1}(M)\ra R^{1}f_{*}(\underline{\C})
\]
 is $SL(2,\C)^{\times3}$. By Grothendieck specialization theorem of algebraic
monodromy, $\r_{\C}$ factors through 
\[
\r:\pi_{1}(C,\bar{\xi})\ra R^{1}f_{*}(\underline{\C})
\]
 with a surjection $\pi_{1}(M)\ra\pi_{1}(\bar{C},\bar{\xi})$. By
the comparison of the $l$-adic cohomology and de Rham cohomology
$R^{1}f_{*}(\C)\cong R^{1}f_{*}(\Q_{l})\otimes\C$ for every $l\neq p$,
we know conditions (1) and (2) in \ref{main theorem} hold for such $C$. %refine the g.o, conditon  

For condition (3), we choose $x$ to be a CM point on $M$. Since
$A_{x}$ is simple, there is a degree 8 field $L\subset\End^{o}(A_{x})$.
We can choose $p$ such that $L$ is unramified over $p$. Then look
at the reduction $\bar{x}$ and $\Q[F_{\bar{x}}]$ is the center of
$\End(X_{\bar{x}})$. Since $L\subset\End(X_{\bar{x}})$ is the maximal
commutative subalgebra, $F\in L$ and in particular, $F$ is unramified
over $p$.

So with a careful choice of $p$, the resultant reduction of a Mumford
curve satisfies all the conditions in \ref{main theorem}.

\section{Frobenius eigenvalues on $\wedge^{4}\cE\ro{\cal {E}nd(\wedge^{2}\cE)}$}

\label{Frobenius eigenvalues} In this section, we compute the dimension
of Frobenius eigenspace at the target spaces and the corresponding
eigenvalues.

Recall the short exact sequence 
\[
1\ra\pi_{1}^{\geom}(C,\bar{\xi})\ra\pi_{1}(C,\bar{\xi})\ra{\rm {Gal}(\bar{\F}_{q}/\F_{q})\ra1.}
\]
The Zariski closure $G^{\geom}_l$, of $\r(\pi^{\geom}_1(C,\bar{\xi}))$ is a normal subgroup of $G_l$. Since $\cE_l$ is pure, it follows from \cite[1.3.9,3.4.1(iii)]{Weil II} that $G^{\geom}_l$ is semisimple. The connected component of identity $G^{\geom,o}_l$ is the derived group of $G^o_l$.

%%%%Since $G^{\geom,o}_l\otimes \C \subset GL(8,\C)$ is $SL(2,\C)^{\times 3}$ modulo finite central elements, $G_l\otimes \C \subset GL(8,\C)$ is contained in the normalizer $\C^*.SL(2,\C)^{\times 3} \rtimes S_3 $. Taking some finite \'etale covering of $C$ if necessary, we can assume that $G_l\otimes \C$ is a subgroup of $\C^*.SL(2)^{\times 3}$.

\begin{rmk}
If we assume $G^{\geom}_l \otimes \C$ is entirely contained in $SL(2,\C)^{\times 3}$, then we only need to enlarge the base field $\F_q$ to kill the $S_3$ part. 
\end{rmk}

 For every closed point $c$ of $C$, the action of $F_{c}$ on $\cE_{l\, c}\cong H_{\et}^{1}(X_{\bar{c}},\Q_{l})$
is semisimple (see \cite{Chi}). Therefore $F_{c}$ acts on $\wedge^{4}\cE_{l,c}$
and $\End(\wedge^{2}\cE_{l,c})$ semisimply.

%The action of $\s$ and hence so is the action of $F_{c}$. Therefore
%$F$ acts on $\wedge^{4}\cE_{l,c}$ and $\cE nd(\wedge^{2}\cE_{l,c})$ semisimply.

Firstly let us consider the space 
\[
H_{\et}^{0}(C_{\bar{\F}_{q}},\scr{E} nd(\wedge^{2}\cE_{l}))\cong\End(\wedge^{2}(\cE_{l,c}))^{\pi_{1}^{\geom}(C,\bar{c})}.
\]
 Since $F_c$ acts on $\End(\wedge^{2}\cE_{l,c})$ semisimply, we can calculate
its eigenvalues over $\C$. 

Let $V$ be the dimension 2 standard representation of $SL(2,\C)$. Condition (1) of Theorem \ref{main theorem} shows that $\cE_{l.c}\otimes \C \cong V^{\otimes3} $ is the tensor product of three standard representations of $SL(2,\C)$.   So base change to $\C$, 
\[
H_{\et}^{0}(C_{\bar{\F}_{q}},\scr{E} nd(\wedge^{2}\cE_{l}))\otimes\C\cong\End(\wedge^{2}(V^{\otimes3}))^{SL(2,\C)^{\times3}}.
\]

As a representation of $SL(2,\C)^{\times3}$, $\wedge^{2}(V^{\otimes3})$
decomposes into four distinct irreducible components 
\[
\wedge^{2}(V^{\otimes3})\cong\oplus^4_{i=1}W_{i}.
\] 
There $W_1, W_2, W_3$ are all $S^2 V \otimes S^2 V$ and $W_4$ is the trivial representation of $SL(2,\C)^{\times 3}$. Yet $W_1, W_2, W_3$ are pairwisely non-isomorphic $SL(2, \C)^{\times 3}$ representations. 

Let $p_k$ ($i_k$, resp.) be the projection from $\wedge^{2}(V^{\otimes3})$ to $W_k$ (inclusion from $W_k$ to $\wedge^{2}(V^{\otimes3})$, resp.). We have \[\End(\wedge^{2}(V^{\otimes3}))^{SL(2,\C)^{\times3}}\cong \oplus^4_{k=1} (i_k\circ \text{id}_{W_k}\circ p_k).\] 
Note each id$_{W_k}$ is invariant under the action of $SL(2,\C)^{\times 3}$ and the scalar multiplication. Thus id$_{W_k}$ is further invariant under the action of $G_l$. In particular, the Frobenius $F_c$ fixes each $\text{id}_{V_{i}}$. So the invariant
space 
$
H_{\et}^{0}(C_{\bar{\F}_{q}},\scr{E} nd(\wedge^{2}\cE_{l}))^{F}
$ has dimension 4.

Secondly, 
\[
H_{\et}^{0}(C_{\bar{\F}_{q}},\wedge^{4}\cE_{l})=(\wedge^{4}\cE_{l\,{c}})^{\pi_{1}^{\geom}(C,\bar{\xi})}.
\]
Similarly, base change to $\C$ and it is isomorphic to $\wedge^{4}(V^{\otimes3})^{SL(2,\C)^{\times3}}$.
One can directly compute by hand, or see the proof of Theorem
4.1 in \cite{Moon} to conclude that this space only has dimension
1 which is generated by the polarization. Therefore the corresponding Frobenius
eigenvalues are $q^{2}$.

In summary, we have the following results: 
\begin{equation}
\begin{aligned}\dim H_{\et}^{0}(C_{\bar{\F}_{q}},\scr{E} nd(\wedge^{2}\cE_{l}))^{F} & =4,\\
\dim H_{\et}^{0}(C_{\bar{\F}_{q}},\wedge^{4}\cE_{l})^{F-q^{2}} & =1.
\end{aligned}
\label{l-adic results}
\end{equation}

\section{Comparison of Lefschetz Trace Formulas}

\label{comparison} In this section, we compare Lefschetz Trace Formulas
to obtain a similar result to (\ref{l-adic results}) in the case of crystalline cohomology. 

We firstly consider
$\cE_{p}:=R^{1}\pi_{\cri,*}(\cO_{X})$. Since $\s$ is the identity
on $\F_{q}$, the absolute Frobenius $F$ acts linearly on $\cE_{p,c}$.
Since the local crystalline characteristic polynomial coincides with
the $l$-adic one (\cite[1.3.5]{Ill3}) 
\begin{equation}\label{Ill}
\det(1-tF|_{\cE_{p\, c}})=\det(1-tF|_{\cE_{l,c}}),
\end{equation}
 the eigenvalues of $F$ on $\cE_{l,c}$ and $\cE_{p\, c}$ are
identical.

Let $\cF_{l}$ be either $\wedge^{4}\cE_{l}\ro\scr{E} nd(\wedge^{2}\cE_{l})$.
Since $\cE_{l}$ comes from geometry, by Deligne's Weil II, the $l$-adic
relative Lefschetz Trace Formula provides 
\[
\prod_{c\in C}\det(1-tF|_{\cF_{l,c}})=\prod_{i}\det(1-tF|_{H_{\et}^{i}(C_{\bar{\F}_{q}},\cF_{l})})^{(-1)^{i}}.
\]

In the $p$-adic case, we still use $\cF_{p}$ to represent either
$\wedge^{4}\cE_{p}\ro\scr{E} nd(\wedge^{2}\cE_{p})$. Since $\cE_{p}$
is a Dieudonne crystal, $\cF_{p}$ is automatically overconvergent.
By a theorem of Etesse and le Stum (\cite[2.1.2]{Kedlaya}), we also
have a Lefschetz Trace Formula within crystalline cohomology setting
\[
\prod_{c\in C}\det(1-tF|_{\cF_{p,c}})=\prod_{i}\det(1-tF|_{H^{i}_\cri(C/{\Z_{q}},\cF_{p})})^{(-1)^{i}}.
\]
 Combining with equality (\ref{Ill}), we have 
\begin{equation}
\prod_{i}\det(1-tF|_{H_{\et}^{i}(C_{\bar{\F}_{q}},\cF_{l})})^{(-1)^{i}}=\prod_{i}\det(1-tF|_{H^{i}_\cri(C/\Z_{q},\cF_{p})})^{(-1)^{i}}.\label{the formula}
\end{equation}

By Deligne's Weil II \cite{WeilII}, the \'etale cohomology groups $H_{\et}^{i}(C_{\bar{\F}_{q}},\cF_{l})$
is pure of weight $i+j$ where $\cF_{l}$ has weight $j$. Since $\cF_{p}$
is pointwisely pure, by \cite[Theorem 5.3.2]{Kedlaya}, $H_{\cri}^{i}(C/\Z_{q},\cF_{p})$
has the purity which implies, on each side of equality (\ref{the formula}),
there is no cancellation between the numerator and the denominator. All zeros or poles have the expected
complex norms. Then we have the following termwise equality from (\ref{the formula}).
\begin{equation}
\det(1-tF|_{H_{\et}^{i}(C_{\bar{\F}_{q}},\cF_{l})})=\det(1-tF|_{H_{\cri}^{i}(C/\Z_{q},\cF_{p})}).\label{the formula 2}
\end{equation}
 So the eigenvalues of $F$ on $H^{0}(C/\Z_{q},\wedge^{4}\cE_{p})\otimes\Q_{q}$
and $H^{0}(C/\Z_{q},\scr{E} nd(\wedge^{2}\cE_{p}))\otimes\Q_{q}$ are
identical as on their $l$-adic counterparts. In particular, 
\begin{equation}
\begin{aligned}\dim H^{0}(C/\Z_{q},\wedge^{4}\cE_{p})^{F-q^{2}}\otimes\Q_{q} & =1,\\
\dim_{\Q_{q}}H^{0}(C/\Z_{q},\scr{E} nd(\wedge^{2}\cE_{p}))^{F}\otimes\Q_{q} & =4.
\end{aligned}
\label{p-adic result}
\end{equation}

\section{Compute $\End^{0}(\wedge^{2}\cE_{p})^{F}$}

\label{compute}

In order to apply the main theorem in \cite{Xia4}, we need to prove that $\End^{0}(\wedge^{2}\cE_{p})^{F}\cong\Q_{q}^{\times4}$
as algebras.

\subsection{Frobenius Torus}

\label{Frobeius torus} By \cite[Theorem 2]{Tate}, $\Q[F]\cong\prod K_{i}$
where $K_{i}$ are number fields. The multiplicative group $\Q[F]^{*}$
defines a $\Q$-torus 
\[
T=\prod_{i}{\rm {Res}_{K_{i}/\Q}(\GG_{m}).}
\]
 Viewing $F$ as an element in $G_{l}$, $T$ can be regarded as the
$\Q$-model of the connected component of 1 in the Zariski closure
of the set $\{\r(F)^{n}|n\in\Z\}$ in $G_{l}$ (cf. \cite[Chapter II, Section 13, Proposition 3]{Chevalley}).
In particular, $T$ is contained in a maximal torus of $G_{l}$.

%One has the following theorem by Serre. %\begin{thm} \label{Frobenius torus}
%(\cite[Theorem 3.7]{Chi 2}) There is a Zariski open dense subset $U$ of $G_{l}$, stable under conjugation, such that if $\r(F)\in U(\Q_{l})$, then $T$ is a maximal torus of $G_{l}$. \end{thm} 
By \cite[Theorem 3.7]{Chi 2} and Chebotarev density theorem for the function field, %especially the asymptotic distribution of $F_{c}$ is uniform, 
generic points $c$ on $C$ satisfy that $F_{c}$ generates a maximal torus. 
For every $c$, the torus $T$ is defined over $\mathbb{Q}$. We say $T$ is \textit{unramified} over $\mathbb{Q}_{p}$ if the splitting field of $T$ is unramified over prime $p$, and equivalently, the eigenvalues of $F_{c}$ are unramified over $p$. 

\begin{rmk} \label{unramified condition}
Varying the prime $l$, we obtain a compatible system of $l$-adic representation as stated in \cite[6.5]{Larsen}. The existence of a point $c$ satisfying (3) in \ref{main theorem} requires that $G_l$ is unramified over $\Q_p$. 

On one hand, by \cite[Proposition 8.9]{Larsen} and \cite[Proposition 1.2, Theorem 3.2]{Larsen2}, for a subset of primes $l$ of density $1$(or even $l$ large enough), $G_l$ is unramified over $Q_l$. However, most results in the two paper have involved Dirichlet density restriction and hence can not be applied directly to our case.

On the other hand, we expect that if $G_l$ is  unramified over $\Q_q$, then there always exists a closed point $c$ satisfying (3) in \ref{main theorem}. 

We also think generic ordinary property of $X\ra C$ should also provide more information on Frobenius eigenvalues. 
\end{rmk}

\subsection{Eigenvalues of $F_c$ on $\cE_{p\,c}$}
\label{l}
Note up to now, we have not used condition (2) and (3) in \ref{main theorem}. Under the condition (2), by \ref{Frobeius torus}, we always can find $c$ such that $X_c$ ordinary and $\r(F_c)$ a maximal torus. 
Further with the condition (3), there exists a closed point $c$ which satisfies the following two conditions: 
\begin{enumerate}
\item $X_{c}$ is ordinary, 
\item the Frobenius torus $T$ is a maximal torus in $G_{l}$. 
\end{enumerate}

Now we study the eigenvalues of the Frobenius on the fiber over $c$.
Since $X_{c}$ is ordinary, $\cE_{p\, c}$ is the product of a unit
root crystal ${\cal {U}_c}$ and its dual ${\cal {U}^{\vee}_c}$. Let
$\l_{1},\cdots,\l_{4}$ be the eigenvalues of $F_c$ on ${\cal {U}_c}$.
Then on ${\cal {U}^{\vee}_c}$, the eigenvalues of $F_c$ are $\displaystyle\frac{q}{\l_{i}}$.
Since ${\cal {U}_c}$ is a unit root crystal, $\l_{i}$ are all $p$-adic units.
Since $\cE_{p\, c}$ has pure weight 1, $\l_{i}$ all have complex norm $q^{\frac{1}{2}}$. 

By \ref{Ill}, the Frobenius $F_c$ also has eigenvalues $\l_1,\cdots, \l_4, \displaystyle\frac{q}{\l_1}, \cdots, \displaystyle\frac{q}{\l_4}$ on $\cE_{l,c}$. Since the Frobenius torus is the maximal torus, the Frobenius eigenvalues $\l_i$ correspond to the weights in the $SL(2)^{\times 3}$ representation $V^{\otimes 3}$. Let $a, b, c$ be the three highest weights in the three standard representation of $SL(2, \C)$. Then the eight weights of $V^{\otimes 3}$ are of the form $\pm a \pm b \pm c$ and they have a configuration as vertices of a cube. In this cube, the four $p$-adic units $\l_1, \cdots, \l_4$ lie in the same face.  Without loss of generality, we can assume that $\l_1$ corresponds to the highest weight $a+b+c$ and $\l_2, \l_3, \l_4$ correspond to $a+b-c, a+c-b$ and $a-b-c$. Then the only relation between $\l_1, \cdots,\l_4$ is $\l_1\l_4=\l_2\l_3$. So we have the following lemma.

% have a configuration as vertices of a cube. 
\begin{lemma} \label{no relation}
Under the above choice of $c$, the eigenvalues $\l_i$ have no relations other than those generated by  $\l_i\displaystyle\frac{q}{\l_i}=q $ and $\l_1\l_4=\l_2\l_3$.
\end{lemma}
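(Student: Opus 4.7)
The plan is to translate the multiplicative relations among the eight algebraic numbers $\l_1,\ldots,\l_4,q/\l_1,\ldots,q/\l_4$ into additive relations in the character lattice $X^*(T)$ of the Frobenius torus, and then to carry out a short $\Z$-linear computation. By Section \ref{Frobeius torus}, $T$ is the identity component of the Zariski closure of $\langle\r(F_c)\rangle$, so $F:=\r(F_c)$ generates a Zariski-dense subgroup of $T$. Consequently, for any $\chi_1,\ldots,\chi_k \in X^*(T)$ and integers $n_i$, the multiplicative relation $\prod \chi_i(F)^{n_i}=1$ in $\overline{\Q}^*$ is equivalent to the additive relation $\sum n_i\chi_i=0$ in $X^*(T)$, so the question reduces to computing all additive relations among the eight characters that produce the eight eigenvalues.

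Second, I would establish $\mathrm{rk}(T)=4$. Since $T$ is maximal in $G_l$, one has $\mathrm{rk}(T)=\mathrm{rk}(G_l^{\geom,o})+\mathrm{rk}(Z(G_l^o)^o)$, the first summand being $3$ by condition (1). Schur's lemma applied to the irreducible $SL(2,\Q_l)^{\times 3}$-representation $V^{\otimes 3}$ shows that the centralizer of $G_l^{\geom,o}$ in $GL(\cE_{l,c})$ is the one-dimensional torus of scalars, giving the upper bound $\mathrm{rk}(Z(G_l^o)^o)\leq 1$. On the other hand, the similitude character $\nu\colon G_l\to\GG_m$ coming from the polarization satisfies $\nu(F)=q\neq 1$, so $Z(G_l^o)^o$ is nontrivial and must equal the full scalar torus. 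This forces $\mathrm{rk}(T)=4$.

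Third, pick a basis $(z,a,b,c)$ of $X^*(T)\cong\Z^4$ with $a,b,c$ lifts of the three fundamental $SL(2)$-weights and $z$ a generator of the central factor with $z^2=\nu$. The paragraph preceding the lemma then expresses the eight eigenvalue-characters as $z\cdot a\cdot b^{\pm 1}c^{\pm 1}$ (for the four $\l_i$) and $z\cdot a^{-1}\cdot b^{\mp 1}c^{\mp 1}$ (for the four $q/\l_i$), yielding an explicit $4\times 8$ integer matrix $M$ whose image has rank $4$, so $\ker M$ has rank $4$ as well. The three relations $\l_1(q/\l_1)=\l_j(q/\l_j)$ for $j=2,3,4$ together with the single relation $\l_1\l_4=\l_2\l_3$ produce four explicit elements of $\ker M$, and a $4\times 4$ minor of the resulting relation matrix with determinant $\pm 1$ confirms that they generate $\ker M$ integrally, which gives the lemma.

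The main obstacle is the rank computation: one must pin down $\mathrm{rk}(Z(G_l^o)^o)=1$ using Schur's lemma (upper bound) together with the non-triviality $\nu(F)=q\neq 1$ (lower bound). Once $\mathrm{rk}(T)=4$ is in place, the explicit description of the eight characters is dictated by the $SL(2)^{\times 3}$-structure plus the central shift, and the remaining step reduces to the observation that the four weights of $V^{\otimes 3}$ lying on a single face of the weight cube satisfy exactly one nontrivial $\Z$-linear relation.
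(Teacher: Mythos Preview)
Your proposal is correct and follows essentially the same approach as the paper. The paper's ``proof'' is just the paragraph preceding the lemma (eigenvalues correspond to the weights $\pm a\pm b\pm c$ of $V^{\otimes 3}$ because $T$ is maximal, the four unit-root eigenvalues sit on one face of the weight cube, and the unique face relation is $\l_1\l_4=\l_2\l_3$), together with a remark pointing to \cite[Section~4]{Noot}; you are carrying out that same weight-lattice computation explicitly. Your treatment is in fact more careful than the paper's: you make precise the passage from multiplicative relations among the $\l_i$ to additive relations in $X^*(T)$ via Zariski density of $\langle F\rangle$ in $T$, and you justify $\mathrm{rk}(T)=4$ (equivalently, that the central similitude direction is genuinely present) rather than tacitly working in the rank-$3$ torus of $SL(2)^{\times 3}$, which the paper does without comment. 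One minor caution: your sentence ``pick a basis $(z,a,b,c)$ of $X^*(T)\cong\Z^4$'' may overstate things, since the exact integral form of $X^*(T)$ depends on the isogeny type of $G_l^o$; but this is harmless for your purposes, because the kernel computation only uses the images of the eight weight-characters and of $\nu$ in $X^*(T)\otimes\Q$, and your unimodular-minor check shows the displayed relations already generate the kernel over~$\Z$.
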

\begin{rmk}
Lemma \ref{no relation} also follows from the arguments in \cite[Section 4]{Noot}. 
\end{rmk}

%\begin{center}
%\begin{tikzpicture}
%\pgfmathsetmacro{\cubex}{2}
%\pgfmathsetmacro{\cubey}{2}
%\pgfmathsetmacro{\cubez}{2}
%\draw[] (0,0,0) -- ++(-\cubex,0,0) -- ++(0,-\cubey,0) -- ++(\cubex,0,0) -- cycle;
%\draw[] (0,0,0) -- ++(0,0,-\cubez) -- ++(0,-\cubey,0) -- ++(0,0,\cubez) -- cycle;
%\draw[] (0,0,0) -- ++(-\cubex,0,0) -- ++(0,0,-\cubez) -- ++(\cubex,0,0) -- cycle;
%\end{tikzpicture} 
%\end{center}
%Furthermore, all relations between these weights can be read directly from the cube. 

\begin{prop} 
\[
\End^{0}(\wedge^{2}\cE_{p})^{F}\cong\Q_{q}^{\times4}
\]
 as algebras. \end{prop}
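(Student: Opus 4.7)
The plan is to construct four orthogonal primitive idempotents in $A := H^{0}((C/\Z_{q})_{\cri}, \End^{0}(\wedge^{2}\cE_{p}))^{F}\otimes\Q_{q}$ whose sum is the identity, thereby exhibiting $A \cong \Q_{q}^{\times 4}$ as an algebra.

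First I would verify that $A$ is commutative and étale of rank $4$ over $\Q_{q}$. The dimension is supplied by equation (\ref{p-adic result}). After base change to $\bar{\Q}_{q}$, the algebra $A \otimes_{\Q_{q}}\bar{\Q}_{q}$ sits inside the commutant of $G_{l}^{\geom,o}$ in $\End(\wedge^{2}\cE_{p,c})\otimes \bar{\Q}_{q}$. By Schur's lemma applied to the decomposition $\wedge^{2}(V^{\otimes 3}) = W_{1}\oplus W_{2}\oplus W_{3}\oplus W_{4}$ of Section \ref{Frobenius eigenvalues}, whose four summands are pairwise non-isomorphic irreducible $SL(2,\C)^{\times 3}$-representations, this commutant is $\bar{\Q}_{q}^{\times 4}$, spanned by the four orthogonal projectors $e_{1},e_{2},e_{3},e_{4}$ onto the $W_{k}$. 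Matching dimensions gives $A \otimes \bar{\Q}_{q}\cong \bar{\Q}_{q}^{\times 4}$, so $A$ is étale; writing $A \cong \prod_{i}K_{i}$ as a product of separable field extensions of total degree $4$, the primitive idempotents of $A$ correspond bijectively to the orbits of $\Gal(\bar{\Q}_{q}/\Q_{q})$ on $\{e_{1},e_{2},e_{3},e_{4}\}$.

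Second I would show that each $e_{k}$ is individually Galois-fixed. The projector $e_{4}$ projects onto the line spanned by the symplectic polarization of $\cE_{p}$, which is defined over $\Q$, so $e_{4}\in A$ is automatic. For $e_{1}, e_{2}, e_{3}$ I would use the eigenvalue analysis of Section \ref{l}: write the eight eigenvalues of $F_{c}$ on $\cE_{p,c}$ as $\mu_{\epsilon_{1}\epsilon_{2}\epsilon_{3}} = q^{1/2} t_{1}^{\epsilon_{1}} t_{2}^{\epsilon_{2}} t_{3}^{\epsilon_{3}}$, so that $q t_{1}^{2} = \lambda_{1}\lambda_{4} = \lambda_{2}\lambda_{3}$, $t_{2}^{2} = \lambda_{1}/\lambda_{3}$, $t_{3}^{2} = \lambda_{1}/\lambda_{2}$. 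By Lemma \ref{no relation}, the quantities $t_{1}^{2},t_{2}^{2},t_{3}^{2}$ together with $q$ are multiplicatively independent in $\bar{\Q}_{q}^{\times}$. The $F$-eigenvalues on each $W_{k}$ are the nine monomials $q\, t_{i}^{2\alpha} t_{j}^{2\beta}$ for $\alpha,\beta\in\{-1,0,1\}$, where $(i,j)$ is the pair of nontrivial tensor factors of $W_{k}$. Condition (3) of Theorem \ref{main theorem} forces the $\lambda_{i}$ to be unramified over $\Q_{p}$, and after enlarging $\F_{q}$ if necessary (harmless for the theorem's conclusion, by the remark in Section \ref{Frobenius eigenvalues}) the splitting field of the Frobenius torus may be assumed contained in $\Q_{q}$, whence each $\lambda_{i}$ and hence each $t_{i}^{2}$ lies in $\Q_{q}$. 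The Frobenius torus $T$ is then $\Q_{q}$-split, and the multiplicative independence of the $t_{i}^{2}$ implies the decomposition of $\cE_{p,c}$ as a tensor product $V_{1}\otimes V_{2}\otimes V_{3}$ is uniquely determined by $T$ up to signs that leave each $S^{2}V_{i}$ invariant. Consequently each $W_{k}\subset \wedge^{2}\cE_{p,c}$ is $\Q_{q}$-rational, so is $e_{k}$, and the four orthogonal projectors give $A \cong \Q_{q}^{\times 4}$.

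The main obstacle is the Galois-descent step: ruling out the a priori possibility that $\Gal(\bar{\Q}_{q}/\Q_{q})$ permutes $\{e_{1},e_{2},e_{3}\}$ via an $S_{3}$-twist coming from the outer automorphism of $G_{l}^{\geom,o}$ that permutes the three $SL(2)$ factors, which would force $A$ to contain a nontrivial étale extension rather than split as $\Q_{q}^{\times 4}$. It is precisely here that Lemma \ref{no relation} and condition (3) are essential: multiplicative independence of the $t_{i}^{2}$ makes the three tensor factors individually recognizable from the torus characters, and the unramified hypothesis (combined with the freedom to enlarge $\F_{q}$) forces those characters to lie in $\Q_{q}$.
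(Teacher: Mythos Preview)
Your approach is sound and genuinely different from the paper's. The paper argues by contradiction: writing $A$ as a product of fields, it reduces (after enlarging $\F_q$) to the case where one factor $K$ or $L$ is \emph{ramified} over $\Q_q$, then restricts a ramified element $\eta_K=p^{1/3}$ or $\eta_L=p^{1/2}$ to the ordinary fiber $c$, uses the splitting $\cE_{p,c}=\mathcal{U}_c\oplus\mathcal{U}_c^\vee$ together with Lemma~\ref{no relation}, and obtains a rank contradiction (the image of $\eta$ is too small). You instead show directly that the four primitive idempotents of $A\otimes\bar\Q_q$ are each Galois-fixed. Your route is conceptually cleaner and makes the role of condition~(3) more transparent; the paper's route is more hands-on but avoids any appeal to how Galois interacts with the idempotent decomposition.

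One step in your write-up should be tightened. The assertion that ``the decomposition of $\cE_{p,c}$ as a tensor product $V_1\otimes V_2\otimes V_3$ is uniquely determined by $T$'' is too strong: the torus fixes only the eight eigenlines, and different scalings of eigenvectors yield genuinely different tensor structures, hence different lines inside the $4$-dimensional $F_c$-eigenspace for $q$ in $\wedge^2\cE_{p,c}$. So one cannot conclude directly that $W_k|_c$ is $\Q_q$-rational. What actually makes your argument work (and what your final paragraph correctly isolates) is this: Galois already permutes the finite set $\{e_1,\dots,e_4\}$, and each $e_k$ is singled out among the four by the multiset of $F_c$-eigenvalues on its image; by Lemma~\ref{no relation} the three multisets $\{q\,t_i^{2\alpha}t_j^{2\beta}\}$ for the three pairs $(i,j)$ are pairwise distinct, and once the $\lambda_i$ lie in $\Q_q$ these multisets are Galois-stable. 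Hence the permutation is trivial and $e_k\in A$. The $\Q_q$-rationality of $W_k|_c$ is then a \emph{consequence}, not an input.
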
 \begin{proof}

From \cite[Proposition 5.15]{Xia4} or basic representation theory of $SL(2)$,
we know the condition \ref{p-adic result} implies $\End(\wedge^{2}\cE_{p})^{F}\otimes_{\Q_{q}}\C\cong\C^
{\times 4}$
as algebras. In particular, the algebra $\End(\wedge^{2}\cE_{p})^F$ is commutative.
Therefore $\End^{0}(\wedge^{2}\cE_{p})^F$ is a product of fields.

Note $\wedge^{2}\cE_{p}$ has the polarization as a direct summand.
So 
\[\End(\wedge^{2}\cE_{p})^{F}\cong\Q^{\times 4}_q, \Q_{q}\times K\ro\Q_{q}^{\times2}\times L\]
where $K$ is a degree 3 field extension of $\Q_{q}$ and $L$ has degree
2. Comparing with the decomposition over $\C$, there exists $\eta_K \in K\ro \eta_L \in L$ such that $\im \eta_K $ and $\im \eta_L$ are subcrystals in $\wedge^2 \cE_p$. Further, rank $\im \eta_K=27$ and rank $\im \eta_L=18$.

If $K\ro L$
is unramified over $\Q_{q}$, then by enlarging $f$ in $q=p^{f}$,
it becomes a product of copies of $\Q_{q}$. Therefore we only need
to consider the case $K\ro L$ ramified over $\Q_{q}$. Since $p\neq2\ro 3$, we can
assume $L\cong\Q_{q}(\sqrt{p})$ and $K\cong\Q_{q}(\sqrt[3]{p})$ and we can choose $\eta_K=\sqrt[3]{p}$, $\eta_L=\sqrt{p}$. 

Note $\cE_{p\, c}\cong \cal{U}_c\oplus \cal{U}^\vee_c$. Since the eigenvalues have distinct $p$-adic values, there is no $F_c$-invariant morphisms between ${\wedge^{2}{\cal {U}_{c}}}, \wedge^{2}\cal {U}_{c}^{\vee}$ and $\cal {U}_{c}\otimes\cal {U}_{c}^{\vee}$. Thus we have the decomposition
\[
\End(\wedge^{2}\cE_{p})^F \ra\End(\wedge^{2}\cE_{p\, c})^F \cong\End({\wedge^{2}{\cal {U}_{c}}})\oplus\End(\wedge^{2}{\cal {U}_{c}^{\vee})\oplus\End({\cal {U}_{c}\otimes{\cal {U}_{c}^{\vee}).}}}
\]
The restriction of $F$ to $\End(\wedge^{2}\cE_{p\, c})$ is just as $F_c$. Then by \ref{no relation}, all the eigenvalues of $F$ on $\wedge^{2}{\cal {U}_{c}}$
are $\l_{1}\l_{2},\cdots,\l_{3}\l_{4}$ and there is no more relations between the eigenvalues
of $\wedge^{2}{\cal {U}_{c}}$ other than $\l_{4}\l_{1}=\l_{2}\l_{3}$. So each eigenspace $U_{\l_i\l_j}$ has dimension 1 except for $(1,4)\ro (2,3)$.
Thereby 
\begin{multline}
\End(\wedge^{2}{\cal {U}_{c}})^F \cong\oplus_{(i,j)\neq(1,3),(2,4)}\End(U_{\l_{i}\l_{j}})\oplus\End
(U_{\l_{1}\l_{3}})\\ \cong\oplus_{(i,j)\neq(1,3),(2,4)}\Q_{q}(\l_{i}\l_{j})\oplus M_2(\Q_q(\l_1\l_4)).
\end{multline}
Since the four eigenvalues
$\l_{i}$ are all unramified over $\Q_{q}$ and $L \ro K$ is ramified, the image of the composition
\[
L\ro K \ra\End(\wedge^2\cE_p)^F \ra \End(\wedge^2\cal{U}_c)^F
\]
lies only in $\End(U_{\l_1\l_4})\cong M_2(\Q_q(\l_1\l_4))$. Otherwise, it would induce an embedding $L\ro K \hookrightarrow \Q_q(\l_i\l_j)$.
In particular, ${\eta_K}_{|\wedge^2 \cal{U}_c} \ro {\eta_L}_{|\wedge^2 \cal{U}_c}$ has only rank 2.
%$\wedge^2 \cU_{c}$. 

Restricted to point $c$, the image of $\eta_K$ has dimension at most
only 20. Contradiction. 

For $L$, we know that ${\eta_L}_{|\cal{U}_c \otimes \cal{U}^\vee_c}$ is a surjection.
Note the eigenvalues of $F_c$ on $\cal{U}_c \otimes \cal{U}^\vee_c$ have the form $\displaystyle\frac{q\l_i}{\l_j}$. Again by \ref{no relation}, among these eigenvalues, $\displaystyle\frac{q\l_1}{\l_4}$ has only multiplicity 1. Therefore 
\[\End(\cal{U}_c \otimes \cal{U}^\vee_c)^F \cong \End(\cal{U}_{\frac{q\l_1}{\l_4}} )\oplus \cdots \cong \Q_q(\frac{q\l_1}{\l_4}) \oplus \cdots \] as algebras. Since $\Q_q(\frac{q\l_1}{\l_4})$ is unramified over $\Q_q$, the image of $L$ in $\End(\cal{U}_c \otimes \cal{U}^\vee_c)^F$ excludes $\End(\cal{U}_{\frac{q\l_1}{\l_4}} )$ and hence $\eta_L$ can not be a surjection. The contradiction concludes the proof.
\end{proof}
%since rank $\im {\eta_L}_{c}=18$, 

%Since $F$ has the same action
%On $\wedge^2 \cE_{p\,c} \otimes \C$, we have that the image of $\eta_L\otimes \C$ consists of two rank 9 irreducible representations $S^2V\otimes S^2V$ of $SL(2,\C)^{\times 3}$. We can assume one of the rank 9 representations has highest weight $2a+2b$. Note $\l_1$ corresponds to $a+b+c$ and $\l_2$ to $a+b-c$.  Then $\im \eta_L$ contains the eigenspace corresponding to $\l_1\l_2$. However, we know that $\im \eta_L$ only intersects with $U_{\l_1\l_4}$ in $\wedge^2 \cal{U}_c$. Contradiction.

\bibliographystyle{amsplain}

\bibliography{mybib}

\providecommand{\bysame}{\leavevmode\hbox to3em{\hrulefill}\thinspace}
\providecommand{\MR}{\relax\ifhmode\unskip\space\fi MR }
% \MRhref is called by the amsart/book/proc definition of \MR.
\providecommand{\MRhref}[2]{%
  \href{http://www.ams.org/mathscinet-getitem?mr=#1}{#2}
}
\providecommand{\href}[2]{#2}
\begin{thebibliography}{10}

\bibitem{Chevalley}
Claude Chevalley, \emph{Th\'eorie des groupes de {L}ie. {T}ome {II}. {G}roupes
  alg\'ebriques}, Actualit\'es Sci. Ind. no. 1152, Hermann \& Cie., Paris,
  1951. \MR{0051242 (14,448d)}

\bibitem{Chi}
W{\^e}n~Ch{\^e}n Chi, \emph{On the {$l$}-adic representations attached to some
  absolutely simple abelian varieties of type {${\rm II}$}}, J. Fac. Sci. Univ.
  Tokyo Sect. IA Math. \textbf{37} (1990), no.~2, 467--484. \MR{1071431
  (91g:11060)}

\bibitem{Chi2}
\bysame, \emph{{$l$}-adic and {$\lambda$}-adic representations associated to
  abelian varieties defined over number fields}, Amer. J. Math. \textbf{114}
  (1992), no.~2, 315--353. \MR{1156568 (93c:11038)}

\bibitem{WeilII}
Pierre Deligne, \emph{La conjecture de {W}eil. {II}}, Inst. Hautes \'Etudes
  Sci. Publ. Math. (1980), no.~52, 137--252. \MR{601520 (83c:14017)}

\bibitem{Ill3}
Luc Illusie, \emph{Crystalline cohomology}, Motives ({S}eattle, {WA}, 1991),
  Proc. Sympos. Pure Math., vol.~55, Amer. Math. Soc., Providence, RI, 1994,
  pp.~43--70. \MR{1265522 (95a:14021)}

\bibitem{Kedlaya}
Kiran~S. Kedlaya, \emph{Fourier transforms and {$p$}-adic `{W}eil {II}'},
  Compos. Math. \textbf{142} (2006), no.~6, 1426--1450. \MR{2278753
  (2008b:14024)}

\bibitem{Larsen}
M.~Larsen and R.~Pink, \emph{On {$l$}-independence of algebraic monodromy
  groups in compatible systems of representations}, Invent. Math. \textbf{107}
  (1992), no.~3, 603--636. \MR{1150604 (93h:22031)}

\bibitem{Larsen2}
\bysame, \emph{Abelian varieties, {$l$}-adic representations, and
  {$l$}-independence}, Math. Ann. \textbf{302} (1995), no.~3, 561--579.
  \MR{1339927 (97e:14057)}

\bibitem{Zuo2}
Sheng Mao and Kang Zuo, \emph{On the newton polygons of abelian varieties of
  mumford's type}, arXiv:1106.3505v1.

\bibitem{Moon}
B.~J.~J. Moonen and Yu.~G. Zarhin, \emph{Hodge classes and {T}ate classes on
  simple abelian fourfolds}, Duke Math. J. \textbf{77} (1995), no.~3, 553--581.
  \MR{1324634 (96b:14010)}

\bibitem{Mum}
D.~Mumford, \emph{A note of {S}himura's paper ``{D}iscontinuous groups and
  abelian varieties''}, Math. Ann. \textbf{181} (1969), 345--351. \MR{0248146
  (40 \#1400)}

\bibitem{Noot}
Rutger Noot, \emph{Abelian varieties with {$l$}-adic {G}alois representation of
  {M}umford's type}, J. Reine Angew. Math. \textbf{519} (2000), 155--169.
  \MR{1739726 (2001k:11112)}

\bibitem{Tate}
John Tate, \emph{Endomorphisms of abelian varieties over finite fields},
  Invent. Math. \textbf{2} (1966), 134--144. \MR{0206004 (34 \#5829)}

\bibitem{Xia2}
Jie Xia, \emph{A characterization of the good reduction of {M}umford curve},
  arXiv: 1306.0163.

\bibitem{Xia4}
Jie Xia, \emph{Crystalline {H}odge cycles and {S}himura curves in positive
  characteristic}, arXiv:1311.0940.

\bibitem{Xia3}
Jie Xia, \emph{Tensor decomposition of isocrystals characterizes {M}umford
  curves}, arXiv:1310.2682.

\end{thebibliography}

\end{document}